\let\reftagform@=\tagform@
\def\tagform@#1{\maketag@@@{(\ignorespaces\textcolor{blue}{#1}\unskip\@@italiccorr)}}
\renewcommand{\eqref}[1]{\textup{\reftagform@{\ref{#1}}}}
\newtheorem{theorem}{Theorem}
\theoremstyle{plain}
\newtheorem{lemma}{Lemma}
\numberwithin{equation}{section}
\begin{document}
\title[Mercer's inequality for $h$-convex functions]{Mercer's inequality for $h$-convex functions}
\author[M.W. Alomari]{M.W. Alomari}

\address{Department of Mathematics, Faculty of Science and
Information Technology, Irbid National University, 2600 Irbid
21110, Jordan.} \email{mwomath@gmail.com}
\date{\today}
\subjclass[2000]{26D15}

\keywords{$h$-Convex function, Mercer inequality, Jensen
inequality}

\begin{abstract}
A generalization of Mercer inequality for $h$-convex function is
presented. As application, a weighted  generalization of triangle
inequality is given.
\end{abstract}

\maketitle
\section{Introduction}

The  class of $h$-convex functions, which generalizes convex,
$s$-convex (denoted by $K_s^2$, \cite{B1}), Godunova-Levin
functions (denoted by $Q(I)$, \cite{GL}) and $P$-functions
(denoted by $P(I)$, \cite{PR}), was introduced by Varo\v{s}anec in
\cite{V}. Namely, for real intervals $I$ and $J$,  the $h$-convex
function is defined as a non-negative function $f : I \to
\mathbb{R}$ which satisfies
\begin{align*}
f\left( {t\alpha +\left(1-t\right)\beta} \right)\le
h\left(t\right) f\left( {\alpha} \right)+ h\left(1-t\right)
f\left( {\beta} \right),
\end{align*}
where $h:J \to \mathbb{R}$ is a non-negative function defined on
$J$, such that $t\in   (0, 1)\subseteq J\subseteq
\left(0,\infty\right) $ and $x,y \in I $. Accordingly, some
properties of $h$-convex functions were discussed in the same work
of Varo\v{s}anec. The famous references about these classes are
\cite{B1}--\cite{HL} and \cite{MP}--\cite{PR}.

Let $w_1, w_2,\cdots,w_n$ be positive real numbers $(n \ge 2)$ and
$h:J\to \mathbb{R}$ be a non-negative supermultiplicative
function. In \cite{V}, Varo\v{s}anec discussed the case that, if
 $f$ is a non-negative $h$-convex on $I$, then for $x_1,x_2,\cdots,x_n \in I$ the following inequality
 holds
\begin{align}
f\left( {\frac{1}{{W_n }}\sum\limits_{k = 1}^n {w_k x_k } }
\right) \le \sum\limits_{k = 1}^n {h\left( {\frac{{w_k }}{{W_{n }
}}} \right) f\left( {x_k } \right)}, \label{Var}
\end{align}
where $W_n=\sum\limits_{k = 1}^n{w_k}$. If $h$ is
submultiplicative function and
 $f$ is an $h$-concave then inequality  is
 reversed. In case $h(t)=t$ we refer to the classical version of
 Jensen's inequality.

 If $f$ is convex on $I$, then
 for any finite
positive increasing sequence  $\left(x_k\right)_{k=1}^{n} \in I$,
we have
\begin{align}
f\left( {x_1  + x_n  -   \sum\limits_{k = 1}^n {w_k x_k } }
\right) \le f\left( {x_1 } \right) + f\left( {x_n  } \right) -
\sum\limits_{k = 1}^n {w_k f\left( {x_k } \right)},\label{mercer}
 \end{align}
where  $w_1, w_2,\cdots,w_n$ are positive real numbers such that
$\sum\limits_{k = 1}^n {w_k}=1$. This inequality was established
by Mercer in \cite{Mercer} and it is considered as a variant of
Jensen's inequality.

In this work, a generalization of Mercer inequality for $h$-convex
function is presented. As application, a weighted generalization
of triangle inequality is given.

\section{Mercer analogue inequality for $h$-convex functions}

In order to prove our main result, we need the following Lemma
which generalizes Lemma 1.3 in \cite{Mercer}.
\begin{lemma}
\label{lemma3}Let $h:J \to \mathbb{R}$ be a non-negative
supermultiplicative  function on $J$. Let $\alpha, \beta  \in
[0,1]$ such that $\alpha+\beta =1$ and
  $h\left( \alpha \right) + h\left(
{\beta} \right)\le 1$. For any $h$-convex function $f$ defined on
a real interval $I$ and finite  positive  increasing sequence
$\left(x_k\right)_{k=1}^{n} \in I$, we have
\begin{align}
f\left( {x_1  + x_n  - x_k } \right) \le f\left( {x_1 } \right) +
f\left( {x_n } \right) - f\left( {x_k } \right) \qquad (1\le k\le
n). \label{eq.h.mercer}
\end{align}
 If $h$ is
submultiplicative function, $h\left( \alpha \right) + h\left(
{\beta} \right)\ge 1$ for all $\alpha,\beta\in [0,1]$ with
$\alpha+\beta=1$ and $f$ is an $h$-concave then inequality
\eqref{eq.h.mercer} is
 reversed.
\end{lemma}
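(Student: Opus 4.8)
The plan is to mimic Mercer's original argument (Lemma 1.3 in \cite{Mercer}) but to replace each appeal to ordinary convexity by the defining $h$-convex inequality, and then to absorb the extra multiplicative factor using the hypothesis $h(\alpha)+h(\beta)\le 1$ together with the non-negativity of $f$. First I would dispose of the boundary indices: for $k=1$ and $k=n$ both sides of \eqref{eq.h.mercer} coincide (they reduce to the trivial identities $f(x_n)\le f(x_n)$ and $f(x_1)\le f(x_1)$), and if $x_1=x_n$ the sequence is constant and the inequality is an equality. Hence I may assume $x_1<x_n$ and $1<k<n$.

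For such a $k$, since the sequence is increasing we have $x_1\le x_k\le x_n$, so I can set
\[
\lambda=\frac{x_n-x_k}{x_n-x_1}\in(0,1),\qquad 1-\lambda=\frac{x_k-x_1}{x_n-x_1}\in(0,1).
\]
The two algebraic identities driving the proof are
\[
x_k=\lambda x_1+(1-\lambda)x_n,\qquad x_1+x_n-x_k=(1-\lambda)x_1+\lambda x_n,
\]
both convex combinations of the endpoints $x_1,x_n\in I$. Applying the definition of $h$-convexity to each (with parameter $t=\lambda$ in the first and $t=1-\lambda$ in the second) yields
\[
f(x_k)\le h(\lambda)f(x_1)+h(1-\lambda)f(x_n),\qquad f(x_1+x_n-x_k)\le h(1-\lambda)f(x_1)+h(\lambda)f(x_n).
\]

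Adding these two estimates, the coefficient of $f(x_1)$ and the coefficient of $f(x_n)$ are each exactly $h(\lambda)+h(1-\lambda)$, so
\[
f(x_1+x_n-x_k)+f(x_k)\le\bigl(h(\lambda)+h(1-\lambda)\bigr)\bigl(f(x_1)+f(x_n)\bigr).
\]
Now I invoke the hypothesis with $\alpha=\lambda$ and $\beta=1-\lambda$: since $h(\lambda)+h(1-\lambda)\le 1$ and $f$ is non-negative (so $f(x_1)+f(x_n)\ge 0$), the right-hand side is at most $f(x_1)+f(x_n)$, and rearranging gives precisely \eqref{eq.h.mercer}. For the reversed statement I would run the identical computation with the $h$-concave inequalities, which reverse each $\le$ to $\ge$, and with $h(\lambda)+h(1-\lambda)\ge 1$; non-negativity of $f$ again lets the factor be dropped in the favourable direction.

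I expect the only delicate point to be the bookkeeping at the parameter endpoints: the $h$-convex inequality is posited for $t\in(0,1)$, so I must keep the boundary indices $k\in\{1,n\}$ out of the main argument and settle them directly as above. I also note that supermultiplicativity of $h$ is not actually consumed in this argument; it is carried along as a standing hypothesis inherited from the $h$-convex/Jensen framework of \cite{V}, the real engine being the single scalar condition $h(\alpha)+h(\beta)\le 1$ combined with $f\ge 0$.
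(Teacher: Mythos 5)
Your proof is correct and follows essentially the same route as the paper: both write $x_k$ and $x_1+x_n-x_k$ as symmetric convex combinations of $x_1$ and $x_n$, apply $h$-convexity to each, and absorb the resulting factor $h(\lambda)+h(1-\lambda)\le 1$ using the non-negativity of $f$. The only differences are cosmetic --- you add the two $h$-convexity estimates where the paper chains them into a single string of inequalities, and you additionally (and rightly) dispose of the boundary indices $k\in\{1,n\}$ and observe that supermultiplicativity of $h$ is never actually used, points the paper leaves implicit.
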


\begin{proof}
Let $0<x_1\le \cdots \le  x_{n}$ and $\alpha,\beta   \in [0,1]$
such that $\alpha+\beta =1$ with $h\left( \alpha \right) +h\left(
\beta \right) \le 1$. Following Mercer approach in \cite{Mercer}.
Let us write $y_k=x_1+x_n-x_k$. Then $x_1+x_n=y_k+x_k$, so that
the pairs $x_1, x_n$ and $x_k, y_k$ possess the same midpoint.
Since that is the case there exists $\alpha,\beta \in [0,1]$ such
that $x_k = \alpha x_1  + \beta x_n$ and  $y_k  = \beta x_1  +
\alpha x_n$, where $\alpha+\beta=1$ and $1\le k \le n$. Employing
the $h$-convexity of $f$ we get
\begin{align*}
 f\left( {y_k } \right) = f\left( {\beta x_1  + \alpha x_n } \right) &\le h\left( \beta  \right)f\left( {x_1 } \right) + h\left( \alpha  \right)f\left( {x_n } \right) \\
  &\le \left( {1 - h\left( {\alpha }
\right) } \right)f\left( {x_1 } \right) + \left( {1 - h\left( {\beta} \right) } \right)f\left( {x_n } \right) \\
  &= f\left( {x_1 } \right) + f\left( {x_n } \right) - \left[ {h\left( {\alpha } \right)f\left( {x_1 } \right) + h\left( {\beta} \right)f\left( {x_n } \right)} \right] \\
  &\le f\left( {x_1 } \right) + f\left( {x_n } \right) - f\left( {\alpha  x_1  +  \beta   x_n}\right)\\
  &= f\left( {x_1 } \right) + f\left( {x_n } \right) - f\left( {\alpha  x_1  +  \beta  x_n }\right)\\
  &=f\left( {x_1 } \right) + f\left( {x_n } \right) - f\left(
  {x_k}\right),
 \end{align*}
 and this proves the
required result.
\end{proof}

Now, we are ready to state our main result.
\begin{theorem}
Let $h:J \to  \mathbb{R}$ be a non-negative supermultiplicative
function on $J$.  Let $w_1, w_2,\cdots,w_n$ be positive real
numbers $(n \ge 2)$ such that $W_n=\sum\limits_{k = 1}^n {w_k } $
and  $ \sum\limits_{k = 1}^n h\left( {\frac{{w_k }}{{W_n }}}
\right) \le 1$. If $f$ is $h$-convex on $I$, then for any finite
positive increasing sequence $\left(x_k\right)_{k=1}^{n} \in I$,
we have
\begin{align}
f\left( {x_1  + x_n  -  \frac{1}{{W_n }} \sum\limits_{k = 1}^n
{w_k x_k } } \right) \le f\left( {x_1 } \right) + f\left( {x_n  }
\right) - \sum\limits_{k = 1}^n {h\left( {\frac{{w_k }}{{W_n }}}
\right)f\left( {x_k } \right)}.\label{h.mercer}
 \end{align}
If $h$ is submultiplicative function, $\sum\limits_{k = 1}^n
h\left( {\frac{{w_k }}{{W_n }}} \right) \ge 1$  and $f$ is an
$h$-concave then inequality \eqref{h.mercer} is
 reversed.

\end{theorem}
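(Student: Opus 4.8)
The plan is to imitate Mercer's two-step argument, but with the linear weights replaced by their $h$-images, using \eqref{Var} in place of ordinary Jensen and Lemma~\ref{lemma3} in place of its classical counterpart. Write $\bar x := \frac{1}{W_n}\sum_{k=1}^n w_k x_k$ and set $y_k := x_1 + x_n - x_k$. Since $x_1 \le x_k \le x_n$ gives $y_k \in [x_1,x_n] \subseteq I$, and since $\frac{1}{W_n}\sum_{k=1}^n w_k = 1$, the left-hand argument of \eqref{h.mercer} is itself a weighted mean of the $y_k$:
\[
x_1 + x_n - \bar x \;=\; \frac{1}{W_n}\sum_{k=1}^n w_k\,(x_1 + x_n - x_k) \;=\; \frac{1}{W_n}\sum_{k=1}^n w_k\, y_k .
\]

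First I would apply Varo\v{s}anec's inequality \eqref{Var} to the points $y_1,\dots,y_n$; this is legitimate because $h$ is supermultiplicative and $f$ is $h$-convex, and it gives
\[
f\!\left(x_1 + x_n - \bar x\right) \;\le\; \sum_{k=1}^n h\!\left(\tfrac{w_k}{W_n}\right) f\!\left(y_k\right).
\]
Next, writing $h_k := h(w_k/W_n)$ and bounding each $f(y_k) = f(x_1 + x_n - x_k)$ by Lemma~\ref{lemma3}, then multiplying by $h_k \ge 0$ and summing, I would obtain
\[
f\!\left(x_1 + x_n - \bar x\right) \;\le\; \bigl(f(x_1)+f(x_n)\bigr)\sum_{k=1}^n h_k \;-\; \sum_{k=1}^n h_k\, f(x_k).
\]

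The last step is to absorb the factor $\sum_k h_k$ into $1$, and this is precisely where the hypothesis $\sum_{k=1}^n h_k \le 1$ is used together with the non-negativity of the $h$-convex function $f$: because $f(x_1)+f(x_n)\ge 0$ and $\sum_k h_k - 1 \le 0$, the product $\bigl(f(x_1)+f(x_n)\bigr)\bigl(\sum_k h_k - 1\bigr)$ is $\le 0$, which is exactly the inequality needed to reach $f(x_1)+f(x_n)-\sum_k h_k f(x_k)$ and so establish \eqref{h.mercer}. For the reversed statement every link in the chain flips: \eqref{Var} reverses for submultiplicative $h$ and $h$-concave $f$, Lemma~\ref{lemma3} reverses, and now $\sum_k h_k - 1 \ge 0$ with $f \ge 0$ again produces the correct sign, so the identical argument yields the reverse of \eqref{h.mercer}.

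I do not expect a deep obstacle; the one point demanding care is the application of Lemma~\ref{lemma3} at every index $k$, which presupposes the complementary-weight condition $h(\alpha_k)+h(\beta_k)\le 1$ (where $x_k = \alpha_k x_1 + \beta_k x_n$) for each $k$, so I would either carry it as a standing hypothesis on $h$ inherited from the Lemma or derive it from supermultiplicativity. The only genuinely new feature relative to the classical Mercer inequality is the sign bookkeeping in the final absorption, which essentially relies on $f \ge 0$---a property built into $h$-convexity but unavailable for general convex functions.
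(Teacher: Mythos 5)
Your proof is correct and follows the paper's argument step for step: rewrite the argument of $f$ as $\frac{1}{W_n}\sum_k w_k(x_1+x_n-x_k)$, apply \eqref{Var}, bound each $f(x_1+x_n-x_k)$ via Lemma~\ref{lemma3}, and absorb the factor $\sum_k h\left(\frac{w_k}{W_n}\right)\le 1$ using the non-negativity of $f$. Your closing caveat is well taken and worth recording: the theorem as stated does not explicitly assume $h(\alpha)+h(\beta)\le 1$ for $\alpha+\beta=1$, which Lemma~\ref{lemma3} requires and which does \emph{not} follow from supermultiplicativity (e.g.\ $h(t)=t^{s}$ with $0<s<1$ is multiplicative yet $\alpha^{s}+(1-\alpha)^{s}>1$), so it must be carried as a standing hypothesis --- the paper's own proof invokes the Lemma without verifying it.
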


\begin{proof}
Since $\frac{1}{W_n} \sum\limits_{k = 1}^n {w_k } =1$, we have
\begin{align*}
f\left( {x_1  + x_n  -  \frac{1}{{W_n }} \sum\limits_{k = 1}^n
{w_k x_k } } \right)  &=f\left( {
\sum\limits_{k = 1}^n {\frac{w_k}{W_n }\left(x_1  + x_n  - x_k \right)} } \right)\\
&\le
\sum\limits_{k = 1}^n {h\left( \frac{w_k}{W_n }  \right)f\left(x_1  + x_n  - x_k \right)} \qquad \text{by}\,\, \eqref{Var} \\
  &\le\sum\limits_{k = 1}^n {h\left( \frac{w_k}{W_n }  \right)\left[{f\left(x_1 \right)+f\left(x_n \right)-f\left(x_k \right)}  \right]} \qquad \text{by}\,\, \eqref{eq.h.mercer} \\
  &=\left[f\left(x_1 \right)+f\left(x_n \right)\right]\sum\limits_{k = 1}^n {h\left( \frac{w_k}{W_n }  \right)}-\sum\limits_{k = 1}^n {h\left( \frac{w_k}{W_n }  \right) f\left(x_k \right) } \\
  &\le f\left(x_1 \right)+f\left(x_n \right) -\sum\limits_{k = 1}^n {h\left( \frac{w_k}{W_n }  \right) f\left(x_k \right)
  }\qquad \text{by\, assumption}
   \end{align*}
   and this proves the result in \eqref{h.mercer}.
\end{proof}
  One of the direct application and interesting benefit of \eqref{h.mercer} is to offer an
  upper bound for the converse of $h$-Jensen inequality
  \eqref{Var},
  by rearranging the terms in \eqref{h.mercer} we get
\begin{align}
\sum\limits_{k = 1}^n {h\left( {\frac{{w_k }}{{W_n }}}
\right)f\left( {x_k } \right)} \le f\left( {x_1 } \right) +
f\left( {x_n  } \right) - f\left( {x_1  + x_n  -  \frac{1}{{W_n }}
\sum\limits_{k = 1}^n {w_k x_k } } \right).\label{conv.h.Jensen}
 \end{align}
 For instance, if $f(x)=|x|$, $h(t)=t$  and  $W_n=1$, then we have the
 following refinement of   the celebrated triangle inequality which is of
 great interests itself
\begin{align}
\sum\limits_{k = 1}^n {w_k \left| {x_k } \right|} \le \left| {x_1
} \right| + \left| {x_n  } \right| - \left| {x_1  + x_n  -
  \sum\limits_{k = 1}^n {w_k x_k } } \right|.
 \end{align}
This inequality can be generalized for norms by considering the
mapping $f\left({\bf{x}}\right)=\|{\bf{x}}\|$  $({\bf{x}}\in L)$,
where $L$ is a linear space.

\end{document}